\newcommand{\lv}[1]{} 
\newtheorem{theorem}[subsection]{Theorem}
\newtheorem{lemma}[subsection]{Lemma}
\newtheorem{corollary}[subsection]{Corollary}
\theoremstyle{definition}
\newtheorem{remark}[subsection]{Remark}
\numberwithin{equation}{section} \allowdisplaybreaks
\numberwithin{equation}{subsection}
\def\gm{\mathfrak m}
\def\G{\mathbf G}
\newcommand{\Aut}{\operatorname{Aut}}
 \DeclareMathOperator{\ad}{ad}
\newcommand{\Hom}{\operatorname{Hom}}
\newcommand{\End}{\operatorname{End}}
\newcommand{\Ctd}{\operatorname{Ctd}}
\newcommand{\GL}{{\operatorname{GL}}}
\newcommand{\PGL}{{\operatorname{PGL}}}
\newcommand{\g}{\mathfrak g}
\def\Z{\mathbb Z}
\def\al{\alpha} \def\be{\beta}
\def\si{\sigma}
\newcommand\pa{\partial}
\newcommand\la{\lambda}
\newcommand{\euA}{\EuScript{A}}
\newcommand{\euC}{\EuScript{C}}
\newcommand{\euD}{\EuScript{D}}
\newcommand{\euE}{\EuScript{E}}
\newcommand{\euH}{\EuScript{H}}
\newcommand{\euL}{\EuScript{L}}
\newcommand{\lsl}{\ensuremath{\mathfrak{sl}}}
\newcommand{\gl}{\ensuremath{\mathfrak{gl}}}
\begin{document}

\title[]{ Counterexample to conjugacy of maximal abelian diagonalizable subalgebras
in extended affine Lie algebras}

\author{U. Yahorau}\footnote{The work of the author was supported by the NSERC grant 385795-2010 of Kirill Zainoulline, NSERC grant of Erhard Neher and NSERC grant of Yuly Billig.} 
\address{Department of Mathematics, University of Ottawa, Ottawa, Ontario, K1N 6N5 , Canada}\email{uyahorau@uottawa.ca}

\begin{abstract}
We give a counterexample to the conjugacy of
maximal abelian diagonalizable subalgebras in extended affine Lie algebras.

{\it{Keywords:}} Extended Affine Lie Algebra, Conjugacy, Laurent polynomials.

MSC 2000 17B67, 14L30.
\end{abstract}

\maketitle

\section*{introduction}
That Cartan subalgebras of finite-dimensional simple Lie algebra $\g$ over an algebraically closed field $k$ of characteristic 0 are conjugate is a Chevalley's theorem. They are abelian, act $k$-diagonalizably on $\g$ and are maximal with respect to these properties. It appears that this is the right way to define ``Cartan subalgebras" in the infinite dimensional setup. Celebrated theorem of Peterson and Kac \cite{PK} states that all MADs (maximal abelian diagonalizable subalgebras) of symmetrizable Kac-Moody Lie algebras are conjugate. In the language of extended affine Lie algebras (EALAs for short) finite-dimensional simple Lie algebras are the case of nullity 0, while the affine Kac-Moody Lie algebras are the case of nullity 1. Therefore EALAs can be thought of as the generalization of these classical Lie algebras to higher nullities. In a recent work \cite{CNPY} the conjugacy of all structure MADs of fgc EALAs was proved. However, a natural question was left open: are all maximal abelian $k$-ad-diagonalizable subalgebras of an EALA conjugate?    

The main result of this paper is Theorem~\ref{counterexample}, which answers this question in negative providing a counterexample to a conjugacy of arbitrary MADs in an EALA. This counterexample is ``minimal" in a sense that it is constructed in an EALA of nullity 2, and as we mentioned above the conjugacy always holds in nullities 0 and 1.

The paper is organized as follows. In section~\ref{example} we construct a class of extended affine Lie algebras (parametrized by certain derivations). Then in section~\ref{2} we construct two maximal abelian $k$-ad-diagonalizable subalgebras of this algebra which are not conjugate.

Let us fix notation. Throughout $k$ is an algebraically closed field of characteristic 0 and $R=k[t_1^{\pm{1}},t_2^{\pm{1}}]$ is a ring of Laurent polynomials in 2 variables with coefficients in $k$.

{\bf{Acknowledgments.}} The author would like to thank Vladimir Chernousov, Erhard Neher and Arturo Pianzola for useful discussions.

\section{Example of an Extended Affine Lie Algebra}\label{example}\label{example}
In this section we give an example of an EALA. We will use it in the next section to construct a counterexample to a conjugacy. 

Let $Q=(t_1,t_2)$ be a quaternion Azumaya algebra over $R$.
Thus, it is generated by elements $\imath,\jmath$ subject to relations $\imath^2=t_1, \jmath^2=t_2$ and
$\imath\jmath=-\jmath\imath$. 


One easily checks that $Q$  has a structure of an associative
torus (\cite[Definition 4.20]{Ne1}) of type $\mathbb{Z}^2$. Indeed,
$$
Q=\bigoplus_{(a,b)\in \mathbb{Z}^2}Q^{(a,b)}
$$
is a $\mathbb{Z}^2$-graded Lie algebra over $k$
with one-dimensional graded components
$Q^{(a,b)}=k\imath^a\jmath^b,$ $(a,b)\in \mathbb{Z}^2$, containing invertible elements $\imath^a\jmath^b$.

In general, if $A$ is an associative torus of type $M$ then
$\lsl_n(A)=[\gl_n (A), \gl_n(A)]$ is a Lie torus of type $(A_{n-1},M)$ (see \cite[Definition 4.2]{Ne1} for the definition of a Lie torus and \cite[Exercise 4.21]{Ne1}). Thus, $\euL=\lsl_2(Q)$ is a Lie torus of type $(A_1,\mathbb{Z}^2)$. In particular, it has a double grading such that
$$
\euL^{(a,b)}_0=k \begin{bmatrix}\imath^a\jmath^b   & 0 \\  0&-\imath^a\jmath^b \end{bmatrix}
$$
if $a,b$ are even and
$$
\euL^{(a,b)}_0=k\begin{bmatrix}\imath^a\jmath^b   & 0 \\ 0& -\imath^a\jmath^b  \end{bmatrix}
\oplus
k\begin{bmatrix}\imath^a\jmath^b   & 0 \\  0&\imath^a\jmath^b \end{bmatrix}
$$
otherwise; also we have
$$
\euL^{(a,b)}_{-2}=k \begin{bmatrix} 0 & 0 \\  \imath^a\jmath^b & 0 \end{bmatrix},
$$
$$
\euL^{(a,b)}_2=
k \begin{bmatrix} 0 & \imath^a\jmath^b \\  0 & 0 \end{bmatrix},
$$
where $(a,b)\in \mathbb{Z}^2.$

Define a bilinear form $(-,-)$ on $\euL$ by
$$
\left(
\begin{bmatrix}x_{11} & x_{12}\\ x_{21} & x_{22} \end{bmatrix},
\begin{bmatrix} y_{11} & y_{12}\\ y_{21} & y_{22} \end{bmatrix}
\right)
=(x_{11}y_{11}+x_{12}y_{21}+x_{21}y_{12}+x_{22}y_{22})_0,
$$
where $a_0$ for $a\in Q$ denotes the $Q^{(0,0)}$-component of $a$. This form is
symmetric, nondegenerate, invariant, graded, i.e.
$(\euL^{\lambda}_{\xi},\euL^{\mu}_{\tau})=0$ if $\lambda+\mu\neq 0$ or $\xi + \tau \neq 0$.
(Recall, that a form $(-,-)$ is
{\it{invariant}} if $([a,b],c)=(a,[b,c])$ for all $a,b,c$.)

Any $\theta \in \Hom_\Z(\Z^2, k)$ induces a so-called {\em degree derivation}
$\pa_\theta$ of $\euL$ defined by $\pa_\theta (l^\la) = \theta(\la) l^\la$ for
$l^\la \in \euL^\la$. Take $\theta\in Hom_{\Z}(\Z^2,k)$ such that $\theta((1,0))=1$ and $\theta((0,1))$ is not rational. We put $\euD =k \pa_{\theta}$ and $\euC=\euD^*.$

We set $\euE=\euL\oplus \euC\oplus \euD$
and equip it with the multiplication given by
$$
\begin{array}{ccc}
[l_1\oplus c_1\oplus \pa_1,l_2\oplus c_2\oplus \pa_2]_{\euE}=([l_1,l_2]_{\euL}+\pa_1(l_2)-\pa_2(l_1))\oplus\sigma(l_1,l_2), 
\end{array}
$$
where $\sigma:\euL\times \euL\rightarrow \euC$ is a {\it{central 2-cocycle}} defined by $\sigma(l_1,l_2)(\pa)=(\pa(l_1)|l_2).$

Let  $\euH=k\begin{bmatrix} 1 & 0 \\ 0 & -1 \end{bmatrix}\oplus \euC\oplus \euD$.

The symmetric bilinear form $(-,-)$ on $\euE$ given by
$$
(l_1\oplus c_1\oplus \pa_1,l_2\oplus c_2\oplus \pa_2)=(l_1,l_2)+c_1(\pa_2)+c_2(\pa_1)
$$
is nondegenerate and invariant.

It follows easily from \cite[Theorem 6]{Ne4}, that $(\euE,\euH)$ is an extended affine Lie algebra. In particular, the subalgebra $\euH$ is a MAD of $\euE$ which we call {\it{``standard"}}.

\section{Construction of the Counterexample}\label{2}
In this section we construct a counterexample to conjugacy of MADs in the extended affine Lie algebra $(\euE,\euH)$ of section~\ref{example}. Namely, we construct the MAD $\euH'$ of $\euE$ which is not conjugate to the standard MAD $\euH$.

Let $Q=(t_1,t_2)$ be as above
and let $\euA=M_2(Q)$. Let $V=Q\oplus Q$ be a free right
$Q$-module of rank $2$.
We may view $\euA$ as the algebra $\End_{Q}(V)$ of $Q$-endomorphisms of $V$.
Let $m:V=Q\oplus Q \rightarrow Q$ be a $Q$-linear map given by
$$
(u,v)\mapsto (1+\imath)u-(1+\jmath)v.
$$
Denote its kernel by $W$. It was shown in \cite{GP} that $m$ is split and that
$W$ is a projective $Q$-module of rank 1 which is not free. Since $m$ is split
there is a decomposition $V=W\oplus U$ where $U$ is a free $Q$-module of rank 1.

Let $s\in \End_{Q}(V)$ be the $Q$-linear endomorphism of $V$  which maps $w$ to $-w$ and $u$ to $u$ for all $w\in W$, $u\in U$.

Let $i_W: W\hookrightarrow W\oplus U$, $i_U: U\hookrightarrow W\oplus U$ be canonical inclusions and
$p_W: W\oplus U\twoheadrightarrow W$, $p_U: W\oplus U\twoheadrightarrow U$ be canonical projections.
\begin{lemma}\label{diag}
${\rm ad}(s):\End_{Q}(V)\rightarrow \End_{Q}(V)$ is a $k$-diagonalizable operator.
\end{lemma}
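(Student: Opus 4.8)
The plan is to exhibit an explicit eigenspace decomposition of $\End_Q(V)$ under $\ad(s)$. Using the decomposition $V = W \oplus U$ and the associated inclusions $i_W, i_U$ and projections $p_W, p_U$, any $\phi \in \End_Q(V)$ decomposes uniquely as $\phi = \phi_{WW} + \phi_{WU} + \phi_{UW} + \phi_{UU}$, where $\phi_{WW} = i_W p_W \phi i_W p_W$, $\phi_{WU} = i_W p_W \phi i_U p_U$, and so on; equivalently this is the standard $2\times 2$ block decomposition of $\End_Q(V) = \End_Q(W\oplus U)$ with respect to the summands $W$ and $U$. Since $s = -i_W p_W + i_U p_U$ acts as $-1$ on $W$ and $+1$ on $U$, a direct computation gives $\ad(s)(\phi_{WW}) = 0$, $\ad(s)(\phi_{UU}) = 0$, $\ad(s)(\phi_{WU}) = -2\,\phi_{WU}$, and $\ad(s)(\phi_{UW}) = 2\,\phi_{UW}$.

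First I would record the relations $s\,i_W = -i_W$, $p_W\,s = -p_W$, $s\,i_U = i_U$, $p_U\,s = p_U$, which follow immediately from the definition of $s$. Then for a block $\phi_{UW} = i_U\,\psi\,p_W$ (with $\psi \in \Hom_Q(W,U)$) one computes $s\phi_{UW} = (s i_U)\psi p_W = i_U\psi p_W = \phi_{UW}$ and $\phi_{UW} s = i_U \psi (p_W s) = -i_U\psi p_W = -\phi_{UW}$, hence $\ad(s)(\phi_{UW}) = s\phi_{UW} - \phi_{UW}s = 2\phi_{UW}$; the other three cases are entirely analogous. Therefore
$$
\End_Q(V) = \End_Q(V)_0 \oplus \End_Q(V)_2 \oplus \End_Q(V)_{-2},
$$
where $\End_Q(V)_0 = i_W\End_Q(W)p_W \oplus i_U\End_Q(U)p_U$, $\End_Q(V)_2 = i_U\Hom_Q(W,U)p_W$, and $\End_Q(V)_{-2} = i_W\Hom_Q(U,W)p_U$ are the eigenspaces of $\ad(s)$ for the eigenvalues $0, 2, -2$ in $k$. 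This is exactly the assertion that $\ad(s)$ is $k$-diagonalizable.

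There is no serious obstacle here; the only point requiring a little care is bookkeeping — namely checking that the four blocks $\phi_{WW}, \phi_{WU}, \phi_{UW}, \phi_{UU}$ genuinely span $\End_Q(V)$ and are independent, i.e. that $i_W p_W + i_U p_U = \id_V$ and $p_W i_W = \id_W$, $p_U i_U = \id_U$, $p_W i_U = 0$, $p_U i_W = 0$, all of which are immediate from $V = W \oplus U$ being a direct sum decomposition. Once these identities are in place the eigenspace computation above is a two-line calculation in each case, and diagonalizability of $\ad(s)$ follows.
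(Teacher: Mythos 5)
Your proof is correct and follows essentially the same route as the paper: decompose $\End_Q(V)$ into the four blocks determined by $V=W\oplus U$ via the inclusions and projections, and check each block is an eigenspace of $\ad(s)$ with eigenvalue $0$ or $\pm 2$. (The only cosmetic difference is a sign convention: the paper computes $[\phi,s]$ rather than $[s,\phi]$, so its labels for the $\pm 2$ eigenspaces are swapped relative to yours, which is immaterial for diagonalizability.)
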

\begin{proof}
Notice that there is a canonical isomorphism of Q-modules
$$
\tau:\End_Q(V)\simeq \End_Q(W)\oplus \Hom_Q(W,U)\oplus \Hom_Q(U,W)\oplus \End_Q(U),
$$
$$
\tau(\phi)=p_W\circ \phi\circ i_W+p_U\circ \phi\circ i_W+ p_W\circ \phi\circ i_U+ p_U\circ \phi \circ i_U\;\; {\rm{for}} \;\; \phi\in \End_{Q}(V).
$$
Let $\phi\in \tau^{-1}({\rm End}_Q(W))$. Then
$$
[\phi,s]=\phi\circ s-s\circ \phi=-\phi-(-\phi)=0.
$$
Similarly, if $\phi\in \tau^{-1}({\rm End}_Q(U))$ then $[\phi,s]=0$. It follows
$$
\tau^{-1}(\End_Q(W)\oplus \End_Q(U))\subset \End_Q(V)_{0}.
$$

Let now $\phi\in \tau^{-1}(\Hom_Q(W,U))$. Then
$$
[\phi,s]=\phi\circ s- s\circ \phi=-\phi-\phi=-2\phi
$$
implying
$$
\tau^{-1}(\Hom_Q(W,U))\subset  \End_Q(V)_{-2}.
$$
Similarly,
$$
\tau^{-1}(\Hom_Q(U,W))\subset  \End_Q(V)_{2}
$$
and the assertion follows.
\end{proof}

Let $S\in \euA$ be the matrix (in the standard basis) of the $Q$-linear endomorpism $s$ of $V$.

\begin{lemma}\label{in sl2}
$S\in \lsl_2(Q).$
\end{lemma}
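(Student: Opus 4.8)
The plan is to use the defining description $\euL=\lsl_2(Q)=[\gl_2(Q),\gl_2(Q)]$ (with $\gl_2(Q)=M_2(Q)$ regarded as a $k$-Lie algebra) together with the standard trace description of this derived subalgebra. Writing $\tr\colon M_2(Q)\to Q$ for the ordinary matrix trace and $[Q,Q]$ for the $k$-span of the additive commutators $\{ab-ba:a,b\in Q\}$, one has
$$
[\gl_2(Q),\gl_2(Q)]=\{\,x\in M_2(Q):\tr(x)\in[Q,Q]\,\}.
$$
The inclusion ``$\subseteq$'' is immediate from $\tr(xy)-\tr(yx)\in[Q,Q]$, and ``$\supseteq$'' (the one I actually need, since $S$ will be such an $x$) follows from the matrix-unit identities $[E_{ii}(1),E_{ij}(c)]=E_{ij}(c)$ for $i\neq j$ and $[E_{12}(a),E_{21}(b)]=E_{11}(ab)-E_{22}(ba)$, which let me strip any such $x$ down to the single entry $E_{11}(\tr x)\in E_{11}([Q,Q])$. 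So the lemma reduces to the claim $\tr(S)\in[Q,Q]$.

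Next I would pin down $[Q,Q]$ inside $Q$. Since $R$ is central, $[Q,Q]$ is an $R$-submodule, and the relations
$$
[\imath,\jmath]=2\imath\jmath,\qquad [\imath,\imath\jmath]=2t_1\jmath,\qquad [\jmath,\imath\jmath]=-2t_2\imath,
$$
together with $2,t_1,t_2\in R^{\times}$, give $R\imath+R\jmath+R\imath\jmath\subseteq[Q,Q]$. Conversely the reduced trace $\Trd_{Q/R}\colon Q\to R$ (sending $1\mapsto 2$ and $\imath,\jmath,\imath\jmath\mapsto 0$) is a trace form, hence vanishes on $[Q,Q]$; as $2\in R^{\times}$ its kernel is exactly $R\imath\oplus R\jmath\oplus R\imath\jmath$. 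Therefore $[Q,Q]=\Ker(\Trd_{Q/R})=R\imath\oplus R\jmath\oplus R\imath\jmath$, and all that remains is to show $\Trd_{Q/R}(\tr S)=0$.

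For this I would pass to the invariant picture $M_2(Q)=\End_Q(V)$, view it as an Azumaya $R$-algebra of degree $4$, and use transitivity of reduced traces: under $M_2(Q)\cong M_2(R)\otimes_R Q$ the reduced trace is multiplicative, so $\Trd_{Q/R}\bigl(\tr_{M_2(Q)/Q}(x)\bigr)=\Trd_{M_2(Q)/R}(x)$ for all $x$, and it suffices to prove $\Trd_{M_2(Q)/R}(S)=0$. By construction $S$ is the matrix of $s=i_U\circ p_U-i_W\circ p_W$, the difference of the two idempotents $e_U:=i_Up_U$ and $e_W:=i_Wp_W$ of $\End_Q(V)$ whose images are the rank-one projective $Q$-modules $U$ and $W$. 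The key point is that $\Trd_{M_2(Q)/R}(e_U)=\Trd_{M_2(Q)/R}(e_W)$: both equal $\deg_R Q=2$, as one checks over a faithfully flat extension $R\to R'$ on which $Q$ splits and $U,W$ become free of rank one --- there each of $e_U,e_W$ is conjugate in $M_2(Q\otimes_R R')\cong M_4(R')$ to $\diag(1,1,0,0)$, of reduced trace $2$ --- and reduced trace is unchanged by conjugation and by faithfully flat base change. Hence $\Trd_{M_2(Q)/R}(S)=2-2=0$, so $\tr(S)\in[Q,Q]$ and therefore $S\in\lsl_2(Q)$.

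Everything except the last step is bookkeeping with matrix units, the multiplication table of $Q$, and the reduced trace of a quaternion algebra; the one point carrying genuine content is the identification $\Trd_{M_2(Q)/R}(e_W)=2$, i.e.\ matching reduced traces in the Azumaya algebra $\End_Q(V)$ with ranks of projective $Q$-submodules of $V$, and that is where I expect one has to be careful (one could also sidestep it by computing an explicit matrix for $s$ from the decomposition $V=W\oplus U$ of \cite{GP}, but the reduced-trace route avoids that calculation).
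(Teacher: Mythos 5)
Your proof is correct, but it takes a genuinely different route from the paper's. The paper's argument is a two-line localization: over the fraction field $K$ of $R$ the algebra $Q_K$ is a division ring, so $W_K$ and $U_K$ become free of rank one, $s$ diagonalizes to $\diag(1,-1)$ in an adapted basis, and $S$ is then $\GL_2(Q_K)$-conjugate to the visibly traceless element $[E_{12},E_{21}]$; the (unstated) descent step $M_2(Q)\cap\lsl_2(Q_K)=\lsl_2(Q)$ finishes it. You instead stay over $R$ throughout: you identify $\lsl_2(Q)$ as the reduced-trace-zero elements of the Azumaya algebra $M_2(Q)$ (via the matrix-unit computation of $[\gl_2(Q),\gl_2(Q)]$ and the multiplication table of $Q$), and then evaluate $\Trd_{M_2(Q)/R}(S)=\Trd(e_U)-\Trd(e_W)=2-2=0$ using that the reduced trace of an idempotent is the reduced rank of its image. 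What your approach buys is that it makes explicit precisely the descent fact the paper's ``the assertion follows'' is silently using (both proofs ultimately rest on $\lsl_2(Q)=\Ker\Trd$, which is insensitive to conjugation and base change); the paper's approach buys brevity by exploiting that everything becomes free at the generic point. One small point of care in your version: a single faithfully flat $R'$ on which $Q$ splits \emph{and} $U,W$ are simultaneously free need not be produced explicitly --- after an \'etale splitting cover it is enough that $e_W$ is an idempotent whose image has locally constant rank $2$, since the reduced trace of an idempotent is exactly that rank function; phrased this way the step is airtight without the freeness assertion.
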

\begin{proof}
Let $K$ be a field of fractions of $R$. Then $Q_K=Q\otimes_R K$ is a division algebra and $s$ extends to an operator on $V_K=Q_K\oplus Q_K$ by $Q_K$-linearity. It follows from the definition of $s$ that in an appropriate basis of $V_K$ the matrix of $s$ is ${\rm diag}(1,-1).$ The assertion follows.
\end{proof}

\begin{corollary}
$S$ is a $k$-ad-diagonalizable element of $\euL$ whose eigenvalues are $0,\pm 2$.
\end{corollary}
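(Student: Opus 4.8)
The corollary follows by combining Lemmas~\ref{diag} and~\ref{in sl2} and keeping track of eigenvalues. Recall that $\euL=\lsl_2(Q)$ is a Lie subalgebra of $\euA=\gl_2(Q)\cong\End_Q(V)$, and that under this identification the matrix $S$ corresponds to the endomorphism $s$. By Lemma~\ref{in sl2} we have $S\in\euL$, hence $[\,\euL,S\,]\subseteq\euL$, so $\ad_{\euL}(S)$ is nothing but the restriction of $\ad_{\euA}(S)=\ad_{\End_Q(V)}(s)$ to the $\ad(S)$-invariant subspace $\euL$.

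The plan is: (1) note that $\ad_{\End_Q(V)}(s)$ is $k$-diagonalizable with all eigenvalues in $\{0,\pm2\}$; (2) deduce the same for the restriction $\ad_{\euL}(S)$; (3) check that each of $0,2,-2$ actually occurs on $\euL$. For (1): Lemma~\ref{diag} gives $k$-diagonalizability, and the proof of that lemma shows that the four summands $\tau^{-1}(\End_Q(W))$, $\tau^{-1}(\End_Q(U))$, $\tau^{-1}(\Hom_Q(W,U))$, $\tau^{-1}(\Hom_Q(U,W))$ span $\End_Q(V)$ and lie in the $0$-, $0$-, $(-2)$- and $2$-eigenspaces of $\ad(s)$ respectively, so these are the only eigenvalues. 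For (2): the restriction of a $k$-diagonalizable operator to an invariant subspace is again $k$-diagonalizable (its minimal polynomial divides a polynomial with distinct roots in $k$), and its eigenvalues form a subset of $\{0,\pm2\}$.

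For (3), the value $0$ is clear since $\ad(S)S=0\neq S$. For $\pm2$ I would observe that the off-diagonal pieces already lie in $\euL$. Writing endomorphisms of $V=W\oplus U$ in block form, a typical element of $\tau^{-1}(\Hom_Q(U,W))$ is $\left(\begin{smallmatrix}0&\psi\\0&0\end{smallmatrix}\right)$ with $\psi\in\Hom_Q(U,W)$, and one has the commutator identity
$$
\begin{pmatrix}0&\psi\\0&0\end{pmatrix}=\left[\begin{pmatrix}1_W&0\\0&0\end{pmatrix},\begin{pmatrix}0&\psi\\0&0\end{pmatrix}\right]
$$
inside $\End_Q(V)$, where $1_W$ is the identity of $W$; hence this element lies in $[\gl_2(Q),\gl_2(Q)]=\euL$, and the case of $\tau^{-1}(\Hom_Q(W,U))$ is symmetric. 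These spaces are nonzero because $W$ and $U$ are projective $Q$-modules of rank $1$: indeed $\Hom_Q(U,W)\cong W\neq0$ as $U$ is free of rank $1$, and $\Hom_Q(W,U)\cong\Hom_Q(W,Q)\neq0$. Thus $0,2,-2$ are all eigenvalues of $\ad_{\euL}(S)$, and by (2) there are no others.

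The only point requiring care is this last one: a priori the passage from $\euA$ to the subalgebra $\euL$ could annihilate the $\pm2$-eigenspaces, so one must exhibit nonzero elements of $\euL$ in them — which the block-triangular-equals-commutator remark, together with the projectivity of $W$ and $U$, settles. Everything else is bookkeeping with Lemmas~\ref{diag} and~\ref{in sl2}.
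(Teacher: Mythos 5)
Your proof is correct and follows the same route as the paper, whose entire proof of this corollary is the one line ``Follows from Lemma~\ref{diag} and Lemma~\ref{in sl2}.'' You have simply filled in the details the paper leaves implicit (restriction of a diagonalizable operator to the invariant subspace $\euL$, and the verification that all three eigenvalues $0,\pm 2$ actually occur on $\euL$), and those details check out.
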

\begin{proof}
Follows from Lemma~\ref{diag} and Lemma~\ref{in sl2}.
\end{proof}

Let $(\euE,\euH)$ be the EALA from Section~\ref{example}.
Our next goal is to show that $S$ is
$k$-ad-diagonalizable considered as an element of $\euE$. We prove this in several steps.
First we show that $S$ is $k$-ad-diagonalizable as an element of $\euE_c:=\euL\oplus \euC.$
\begin{lemma}\label{lemma1}
Let $T\in \euL$ be a $k$-ad-diagonalizable element in $\euL$, i.e.
$$
\euL=\bigoplus\limits_{\alpha} \euL_{\alpha},
$$
where as usual
$$
\euL_{\alpha}=\{x\in \euL\,|\, [T,x]_\euL=\alpha x\}.
$$
Then it is also
$k$-ad-diagonalizable viewed
as an element of $\euE_c$.
\end{lemma}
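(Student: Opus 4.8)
The plan is to use the concrete description of $\euE_c = \euL \oplus \euC$ as an abelian extension of $\euL$ by $\euC$ via the cocycle $\sigma$, together with the fact that $\euC$ is central and that $\ad(T)$ acts trivially on $\euC$. Concretely, for $x \oplus c \in \euE_c$ we have $[T, x \oplus c]_{\euE_c} = [T,x]_\euL \oplus \sigma(T,x)$, since $c$ is central and $\sigma(T,x)$ lands in $\euC$ where $\ad(T)$ is zero. So $\ad_{\euE_c}(T)$ is ``block upper triangular'' with respect to the decomposition $\euE_c = \euL \oplus \euC$: on the quotient $\euL$ it acts as the given $\ad_\euL(T)$, which is diagonalizable by hypothesis, and on the subspace $\euC$ it acts as $0$.

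First I would record that $\euC = \euD^*$ is one-dimensional, hence in particular $\ad_{\euE_c}(T)$ restricted to $\euC$ is the zero operator, and that $\euC \subseteq \euL_0$-type behaviour: $\euC$ sits inside the $0$-eigenspace of $\ad_{\euE_c}(T)$. Next, for each eigenvalue $\alpha \neq 0$ of $\ad_\euL(T)$ and each $x \in \euL_\alpha$, I would set $\tilde x = x \oplus \tfrac{1}{\alpha}\sigma(T,x)$ and check directly that $[T, \tilde x]_{\euE_c} = [T,x]_\euL \oplus \sigma(T,x) = \alpha x \oplus \sigma(T,x) = \alpha \tilde x$; this uses only that $\sigma(T, \cdot)$ is $\euC$-valued and $\euC$ is central, so $\ad_{\euE_c}(T)$ kills the correction term. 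Thus each nonzero $\euL_\alpha$ lifts to an $\alpha$-eigenspace $\widetilde{\euL_\alpha} \subseteq \euE_c$ of the same dimension. For $\alpha = 0$, the $0$-eigenspace of $\ad_{\euE_c}(T)$ contains $\euL_0 \oplus \euC$, and since $\sigma(T, x) \in \euC$ for any $x \in \euL_0$ one checks $[T, x \oplus c] = 0$, so indeed $\euL_0 \oplus \euC$ lies in the $0$-eigenspace.

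Finally I would assemble the decomposition $\euE_c = (\euL_0 \oplus \euC) \oplus \bigoplus_{\alpha \neq 0} \widetilde{\euL_\alpha}$ and verify it is a direct sum exhausting $\euE_c$: projecting to the quotient $\euL$ recovers the given eigenspace decomposition $\euL = \bigoplus_\alpha \euL_\alpha$, and the kernel of that projection is exactly $\euC$, which we have placed inside the $0$-summand; a short dimension/linear-algebra argument then shows the listed eigenspaces span and are independent. Hence $\ad_{\euE_c}(T)$ is $k$-diagonalizable, which is the claim. The only mild subtlety — not really an obstacle — is bookkeeping the $\alpha = 0$ case, i.e.\ making sure the central part $\euC$ is correctly absorbed into the zero eigenspace rather than producing a genuine Jordan block; this is guaranteed precisely because $\sigma$ takes values in the center $\euC$ on which $\ad(T)$ vanishes, so no nontrivial nilpotent part can appear.
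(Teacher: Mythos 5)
Your reduction to the $\alpha=0$ case is the right way to see where the content of the lemma lies, and your treatment of the nonzero eigenvalues is correct: for $x\in\euL_\alpha$ with $\alpha\neq 0$, the corrected vector $x\oplus\tfrac{1}{\alpha}\sigma(T,x)$ is indeed an $\alpha$-eigenvector, so no Jordan blocks can appear off the zero eigenvalue. But the $\alpha=0$ case, which you dismiss as bookkeeping, is exactly where the proof has a genuine gap. From the multiplication in $\euE_c$ one has $[T,x\oplus c]_{\euE_c}=[T,x]_{\euL}\oplus\sigma(T,x)$, so for $x\in\euL_0$ this equals $0\oplus\sigma(T,x)$, which is zero if and only if $\sigma(T,x)=0$. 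The fact that $\sigma(T,x)$ lies in the center $\euC$ does \emph{not} force it to vanish; if $\sigma(T,x)\neq 0$ for some $x\in\euL_0$, then $\ad_{\euE_c}(T)$ sends $x\mapsto\sigma(T,x)\mapsto 0$, producing a nontrivial $2\times 2$ Jordan block at eigenvalue $0$, and $T$ is \emph{not} diagonalizable. Your closing remark that centrality of the values of $\sigma$ guarantees ``no nontrivial nilpotent part can appear'' only shows that $(\ad T)^2$ kills $\euL_0\oplus\euC$, which is precisely the statement that the nilpotent part is there but has square zero --- the opposite of what you need. So the assertion $\sigma(T,\euL_0)=0$ is the actual crux of the lemma and is left unproved.

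The paper closes exactly this gap using the cocycle identity: since $\sigma$ is a $2$-cocycle, $\sigma(T,[l_\alpha,l_\beta]_{\euL})=(\alpha+\beta)\,\sigma(l_\alpha,l_\beta)$, hence $[\euL_\alpha,\euL_\beta]_{\euE_c}\subset(\euE_c)_{\alpha+\beta}$; combined with perfectness, $\euE_c=[\euE_c,\euE_c]_{\euE_c}=\sum_{\alpha,\beta}[\euL_\alpha,\euL_\beta]_{\euE_c}$, so $\euE_c$ is spanned by eigenvectors. In particular $\euL_0=\sum_\alpha[\euL_\alpha,\euL_{-\alpha}]_{\euL}$ and the identity with $\beta=-\alpha$ gives $\sigma(T,\euL_0)=0$. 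To repair your argument you would need to import both of these ingredients (the cocycle identity applied to $\ad T$-eigenvectors, and the perfectness of $\euL$); without them the $\alpha=0$ step does not go through.
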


\begin{proof}
Since $\si$ is a central 2-cocycle,  for $l_\al \in \euL_\al$ and $l_\be \in \euL_\be$ we have
\begin{align*}
  0&= \si(T, [l_\al, l_\be]_{\euL}) + \si(l_\al, [l_\be, T]_{\euL}) + \si(l_\be, [T,l_\al]_{\euL}) \\
     &= \si(T, [l_\al, l_\be]_{\euL}) - \be \si(l_\al, l_\be) + \al \si(l_\be, l_\al).
\end{align*}
Thus, $\si(T,[l_\al, l_\be]_{\euL}) = (\al + \be) \si(l_\al, l_\be)$.
Hence
\begin{align*}
  \big[ T, [l_\al, l_\be]_{\euE_c}\big]_{\euE_c} &=
       \big[ T, [l_\al, l_\be]_{\euL} + \si(l_\al, l_\be) \big]_{\euE_c}
       =   \big[ T, [l_\al,l_\be]_{\euL}\big]_{\euE_c}
  \\& =  \big[ T, [l_\al, l_\be]_{\euL}\big]_{\euL} + \si(T, [l_\al, l_\be]_{\euL})
     = (\al + \be) [l_\al, l_\be]_{\euE_c}
\end{align*}
proving $[\euL_\al, \euL_\be]_{\euE_c} \subset (\euE_c)_{\al + \be}$. It then follows from 
$$
\euE_c=[\euE_c, \euE_c]_{\euE_c}  = [\euL,\euL]_{\euE_c} = \sum_{\al,\be\in k} [\euL_\al,\euL_\be]_{\euE_c} \subset \sum_{\gamma\in k} (\euE_c)_\gamma
$$ 
that $\euE_c$ is spanned by eigenvectors of $\ad T$, whence the result.
\end{proof}

 Let $[S,\pa_{\theta}]_{\euE}=y=y_0+y_2+y_{-2}$, where $y_{\lambda}\in \euL_{\lambda}$.
\begin{lemma}
One has $y_0=0.$
\end{lemma}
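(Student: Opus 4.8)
The plan is to compute $y = [S, \pa_\theta]_{\euE}$ explicitly enough to pin down its degree-$0$ component. Recall that $S \in \euL = \lsl_2(Q)$ decomposes along the $\ZZ^2$-grading of $Q$; write $S = \sum_{\la \in \ZZ^2} S^\la$ with $S^\la \in \euL^\la$. By the definition of the degree derivation, $\pa_\theta$ acts on $\euL$ by $\pa_\theta(l^\la) = \theta(\la)\, l^\la$, and since $\pa_\theta$ is a derivation of $\euL$ extended to $\euE$ with $[\pa_\theta, \pa_\theta]_\euE = 0$ and $\sigma(\cdot,\cdot)$ plays no role here (the bracket $[S,\pa_\theta]_\euE$ lands in $\euL$, because $\sigma(l, \pa_\theta)$ is not defined — $\sigma$ takes two arguments in $\euL$), we get $[S,\pa_\theta]_\euE = -\pa_\theta(S) = -\sum_\la \theta(\la)\, S^\la \in \euL$. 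So $y = -\sum_\la \theta(\la) S^\la$, and I must show that the part of $y$ lying in the zero eigenspace $\euL_0$ of $\ad S$ vanishes.

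Next I would identify $\euL_0$ concretely. From the Corollary, $\ad S$ on $\euL$ has eigenvalues $0, \pm 2$, and from Lemma~\ref{diag} together with the isomorphism $\tau$, the zero eigenspace $\euL_0 = \tau^{-1}(\End_Q(W) \oplus \End_Q(U))$ consists of the endomorphisms commuting with $s$ — equivalently, $\euL_0 = \{ x \in \euL : [S,x]_\euL = 0 \}$ is the centralizer $C_{\euL}(S)$. Now $y_0$ is by definition the projection of $y = -\pa_\theta(S)$ onto $\euL_0$ along $\euL_2 \oplus \euL_{-2}$. Because $\ad S$ is $k$-diagonalizable, this projection can be realized canonically: $\euL_0 = \ker(\ad S)$ and the sum $\euL_2 \oplus \euL_{-2}$ equals $\Image(\ad S)$ restricted appropriately, so the degree-$0$ projection of any $z \in \euL$ is the unique $z_0 \in C_\euL(S)$ with $z - z_0 \in \Image(\ad S)$.

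The key computation is then: $y_0 = 0$ iff $\pa_\theta(S) \in \Image(\ad S) = [\,\euL, S\,]_\euL$ modulo nothing — more precisely iff the $C_\euL(S)$-component of $\pa_\theta(S)$ is zero. I would argue this as follows: since $[S,S]_\euL = 0$, applying the derivation $\pa_\theta$ gives $[\pa_\theta(S), S]_\euL + [S, \pa_\theta(S)]_\euL = 0$, which is automatic and says nothing. Instead, the real point is that $S$, being $k$-ad-diagonalizable with eigenvalues $0, \pm 2$, satisfies $(\ad S)^3 = 4\,\ad S$ on $\euL$; one can then write the degree-$0$ projector as a polynomial in $\ad S$, namely $\pi_0 = 1 - \tfrac14 (\ad S)^2$. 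Hence $y_0 = \pi_0(y) = -\pi_0(\pa_\theta S) = -\pa_\theta S + \tfrac14 (\ad S)^2(\pa_\theta S)$, and I need $(\ad S)^2 (\pa_\theta S) = 4\,\pa_\theta S$, i.e. $\pa_\theta S \in \euL_2 \oplus \euL_{-2}$, equivalently $\pa_\theta S$ has no component in $C_\euL(S)$. To see the latter, note that $\pa_\theta$ preserves each graded piece $\euL^\la$ (it only rescales), so $\pa_\theta S = \sum_\la \theta(\la) S^\la$, and I must check that $\sum_\la \theta(\la) S^\la$ commutes with $S$ has zero component — in fact the cleanest route is: the centralizer $C_\euL(S)$ is itself $\ZZ^2$-graded (being $\tau^{-1}(\End_Q(W)\oplus \End_Q(U))$ and $\tau$ being a graded isomorphism), so projecting onto $C_\euL(S)$ commutes with taking $\la$-homogeneous parts; thus $(\pa_\theta S)_0 = \sum_\la \theta(\la) (S^\la)_0 = \pa_\theta(S_0)$ where $S_0$ is the $C_\euL(S)$-part of $S$. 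But $S_0 = S$ since $S \in C_\euL(S)$ trivially, giving $(\pa_\theta S)_0 = \pa_\theta(S)$?? — that would be wrong, so the grading on $C_\euL(S)$ must be examined more carefully: the issue is that the grading of $\euE$ relevant to $\ad S$ eigenspaces is the $(0,\pm2)$-grading, which is distinct from the $\ZZ^2$-weight grading. I would resolve this by working in the field of fractions as in Lemma~\ref{in sl2}: over $K$, $S$ is conjugate to $\diag(1,-1)$, so $C_{\euL_K}(S)$ is the diagonal subalgebra, and $\pa_\theta(S)$ — being the derivative of a matrix whose entries are genuine Laurent monomials — can be computed entrywise; the diagonal ($=$ degree $0$) part of $\pa_\theta(S)$ is $\pa_\theta$ applied to the diagonal part of $S$ in that trivializing basis, but the trivializing basis is $Q$-rational not $R$-rational, so one must track how $\pa_\theta$ interacts with the base change. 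I expect this last point — controlling the degree-$0$ component of $\pa_\theta(S)$ despite $S$ not being diagonal over $R$ — to be the main obstacle, and I would handle it by expressing $S$ explicitly as a matrix over $Q$ via the decomposition $V = W \oplus U$ with an explicit generator of $W$ (coming from the splitting of $m$ in \cite{GP}), then directly differentiating and reading off that the $\euL_0$-part vanishes.
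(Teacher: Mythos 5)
There is a genuine gap. Your reductions are all correct as far as they go: $y=[S,\pa_\theta]_{\euE}=-\pa_\theta(S)\in\euL$, the spectral projector onto $\euL_0=C_{\euL}(S)$ is $\pi_0=1-\tfrac14(\ad S)^2$, and the lemma is equivalent to the assertion that $\pa_\theta(S)$ has no component in $C_{\euL}(S)$. But that assertion — which is the entire content of the lemma — is never established. Your first attempt via the $\ZZ^2$-grading collapses (as you notice yourself: the $\ad S$-eigenspace decomposition is not compatible with the $\ZZ^2$-weight decomposition, and the na\"ive argument would ``prove'' $(\pa_\theta S)_0=\pa_\theta(S)$, which is the wrong conclusion), and the fallback — writing $S$ explicitly in terms of a generator of $W$ and differentiating — is only announced, not carried out. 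Since $W$ is a non-free rank-one projective, the matrix of $S$ and its centralizer are not pleasant to write down, and it is not evident that this computation terminates in a clean cancellation; at minimum, nothing in the proposal certifies that it does.

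The paper sidesteps the computation entirely by duality. The bilinear form on $\euL$ is invariant and pairs $\euL_\xi$ with $\euL_{-\xi}$, so its restriction to $\euL_0$ is nondegenerate. For any $u\in\euL_0$ one has
$$
(y_0,u)=(y_0+y_2+y_{-2},u)=([S,\pa_\theta]_{\euE},u)=-(\pa_\theta,[S,u]_{\euE})=0,
$$
the last equality because $[S,u]_{\euE}=0$ for $u$ in the $0$-eigenspace (Lemma~\ref{lemma1}). Nondegeneracy of $(-,-)|_{\euL_0}$ then forces $y_0=0$, with no need to know $S$, $W$, or $\pa_\theta(S)$ explicitly. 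If you want to salvage your direct approach, the statement you must actually prove is precisely $\pi_0(\pa_\theta S)=0$, and the invariant form is the tool that does it; without some such input your argument does not close.
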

\begin{proof}
Assume $y_0\neq 0$. Since $(-,-)|_{\euL_0}$  is nondegenerate there is
$u\in \euL_0$ such that $(u,y_0)\neq 0$. Then taking into consideration
that $(\euL_0,\euL_2)=(\euL_0,\euL_{-2})=0$ we get
$$
0\neq (y_0,u)=
(y_0+y_2+y_{-2},u)=
([S,\pa_{\theta}]_{\euE},u)=
- (\pa_{\theta},[S,u]_{\euE}).
$$
But it follows from Lemma \ref{lemma1}, that $[S,u]_{\euE}=0,$ --
a contradiction.
\end{proof}

Let $\pa'=\pa_{\theta}-\frac{1}{2}y_2+\frac{1}{2}y_{-2}$.
\begin{lemma}\label{lemma4}
One has $[S,\pa']_{\euE}=0.$
\end{lemma}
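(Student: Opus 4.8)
The plan is to compute $[S,\pa']_{\euE}$ directly using the definition of the bracket on $\euE$ and the decomposition we have already extracted. Write $\pa' = \pa_\theta - \tfrac12 y_2 + \tfrac12 y_{-2}$, where $y_2 \in \euL_2$ and $y_{-2} \in \euL_{-2}$ are the nonzero homogeneous pieces of $[S,\pa_\theta]_\euE$ (the piece $y_0$ vanishes by the previous lemma). Since $S \in \euL$, the bracket $[S,\pa']_\euE$ splits as $[S,\pa_\theta]_\euE - \tfrac12 [S,y_2]_\euE + \tfrac12 [S,y_{-2}]_\euE$. The first term is $y = y_2 + y_{-2}$. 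For the remaining two terms, note that $S$ acts on $\euL$ with eigenvalues $0, \pm 2$, so $[S,y_2]_\euL = 2 y_2$ and $[S,y_{-2}]_\euL = -2 y_{-2}$; these are the $\euL$-parts. I must also keep track of the $\euC$-component $\sigma(S, y_{\pm 2})$ coming from the central cocycle, and the $\euC$-component that $[S,\pa_\theta]_\euE$ itself contributes.

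The key point that makes the central contributions vanish is gradedness of the form: $\sigma(S,y_2)(\pa) = (\pa(S)\mid y_2)$, and since $S \in \euL_0$ lies in the zero eigenspace while $y_2 \in \euL_2$, and the form $(-,-)$ on $\euL$ pairs $\euL_0$ with $\euL_{-2}$ trivially — more precisely $(\euL_0, \euL_2) = 0$ — we get $\sigma(S,y_2) = 0$, and likewise $\sigma(S,y_{-2}) = 0$. (Here one uses that $\pa_\theta$, being a degree derivation, preserves each $\euL_\al$, so $\pa(S) \in \euL_0$.) Similarly, the $\euC$-part of $[S,\pa_\theta]_\euE$ is $\sigma(S,\pa_\theta)$-type data, but since $y_0 = 0$ already encodes that there is no pairing obstruction in degree $0$, the honest way to see it is: the $\euC$-component of $[l_1 \oplus c_1 \oplus \pa_1, l_2 \oplus c_2 \oplus \pa_2]_\euE$ is $\sigma(l_1,l_2)$, which is $\sigma(S, 0) = 0$ when bracketing $S \in \euL$ against $\pa_\theta \in \euD$. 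So $[S,\pa_\theta]_\euE = y$ purely in $\euL$, and the two correction brackets are purely in $\euL$ as well.

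Putting it together, $[S,\pa']_\euE = (y_2 + y_{-2}) - \tfrac12(2y_2) + \tfrac12(-2y_{-2}) = y_2 + y_{-2} - y_2 - y_{-2} = 0$, which is exactly the claim. The main thing to be careful about — the only place a real obstacle could hide — is the bookkeeping of the central component $\euC$: one must confirm that bracketing $S$ (viewed in $\euL \subset \euE$ with zero $\euC$- and $\euD$-parts) with $y_{\pm 2} \in \euL$ produces no central term, i.e. that $\sigma(S, y_{\pm 2}) = 0$, and this is precisely where the gradedness and invariance of $(-,-)$, together with $\pa_\theta(\euL_\al) = \euL_\al$, are used. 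Everything else is the eigenvalue computation for $\ad(S)$ on $\euL$, which we already have from the corollary to Lemmas~\ref{diag} and~\ref{in sl2}.
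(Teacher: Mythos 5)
Your overall strategy (direct computation of the bracket, tracking the $\euL$- and $\euC$-components separately) is viable and genuinely different from the paper's, which instead observes that after the $\euL$-parts cancel the bracket lies in $\euC$, and then kills it by showing it is orthogonal to all of $\euE$ and invoking nondegeneracy of the form. But your execution has a gap at exactly the step you flag as the crux: the claim that $\si(S,y_{\pm 2})=0$. You justify it by asserting that $\pa_\theta$, being a degree derivation, preserves each eigenspace $\euL_\al$ of $\ad S$, so that $\pa_\theta(S)\in\euL_0$. This conflates two different decompositions: $\pa_\theta$ preserves the $\ZZ^2$-graded pieces $\euL^\la$ (superscripts), not the $\ad S$-eigenspaces $\euL_\al$ (subscripts). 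In fact $\pa_\theta(S)=-[S,\pa_\theta]_{\euE}=-y=-(y_2+y_{-2})$, which lies in $\euL_2\oplus\euL_{-2}$ and is nonzero (if $\pa_\theta$ did preserve the $\ad S$-eigenspaces, then $\ad(\pa_\theta(S))$ would vanish on the centerless algebra $\euL$, forcing $y=0$ and making the whole introduction of $\pa'$ pointless). Consequently $\si(S,y_2)(\pa_\theta)=(\pa_\theta(S)\,|\,y_2)=-(y_{-2}\,|\,y_2)$, and there is no reason for this to vanish, since $(-,-)$ restricts to a nondegenerate pairing between $\euL_2$ and $\euL_{-2}$.

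The computation can be repaired without changing its architecture. The central terms enter as $-\tfrac12\si(S,y_2)+\tfrac12\si(S,y_{-2})$, and the calculation above gives $\si(S,y_2)(\pa_\theta)=-(y_{-2}\,|\,y_2)$ and $\si(S,y_{-2})(\pa_\theta)=-(y_2\,|\,y_{-2})$ (using $(\euL_{\pm2},\euL_{\pm2})=0$); by symmetry of the form these are equal, so the two central terms cancel in this particular combination even though neither vanishes individually. Alternatively, follow the paper: one is left with $\tfrac12\si(S,y_{-2}-y_2)\in\euC$, which is orthogonal to $\euL\oplus\euC$ by the definition of the form on $\euE$, and orthogonal to $\pa'$ by invariance since $([S,\pa']_{\euE},\pa')=(S,[\pa',\pa']_{\euE})=0$; nondegeneracy then forces it to be zero.
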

\begin{proof} We first observe that
$$
\begin{array}{lll}
[S,\pa']_{\euE} & = & [S,\pa_{\theta}-\frac{1}{2}y_2+\frac{1}{2}y_{-2}]_{\euE}\\
& = & y-\frac{1}{2}[S,y_2]_{\euE}+\frac{1}{2}[S,y_{-2}]_{\euE}\\
&= & y-y_2-\frac{1}{2}\si(S,y_2)-y_{-2}+\frac{1}{2}\si(S,y_{-2})\\
&=& \frac{1}{2}\si(S,y_{-2}-y_2)\in \euC.
\end{array}
$$

Hence $[S,\pa']_{\euE}$ is orthogonal to $\euL\oplus \euC.$

Also, using the invariance of the form $(-,-)$ we get
$$
([S,\pa']_{\euE},\pa')=(S,[\pa',\pa']_{\euE})=(S,0)=0,
$$
i.e. $[S,\pa']_{\euE}$ is orthogonal to $\pa'.$ We conclude that $([S,\pa']_{\euE},x)=0$ for any $x\in \euE$. Now the assertion follows from the nondegeneracy of the form $(-,-)$.  

\end{proof}
\begin{corollary}
$S$ is a $k$-ad-diagonalizable element of $\euE$.
\end{corollary}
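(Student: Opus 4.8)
The plan is to assemble the pieces that have already been established in the preceding lemmas and corollaries. We have shown (the Corollary following Lemma~\ref{in sl2}) that $S$ acts $k$-ad-diagonalizably on $\euL$ with eigenvalues in $\{0,\pm 2\}$, giving a decomposition $\euL = \euL_0 \oplus \euL_2 \oplus \euL_{-2}$ into $\ad_\euL S$-eigenspaces. By Lemma~\ref{lemma1} this decomposition lifts to a decomposition of $\euE_c = \euL \oplus \euC$ into $\ad_{\euE_c} S$-eigenspaces; in particular $\euC \subset (\euE_c)_0$ and every element of $\euE_c$ is a sum of eigenvectors of $\ad_{\euE_c}S$, hence of $\ad_{\euE}S$ as well, since the bracket on $\euE$ restricted to $\euE_c$ agrees with the bracket on $\euE_c$.

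It then remains to handle the one extra dimension spanned by $\pa_\theta$. Here I would invoke Lemma~\ref{lemma4}: setting $\pa' = \pa_\theta - \tfrac12 y_2 + \tfrac12 y_{-2}$, where $[S,\pa_\theta]_\euE = y_0 + y_2 + y_{-2}$ with $y_0 = 0$ by the preceding lemma, we have $[S,\pa']_\euE = 0$, so $\pa'$ is itself an eigenvector of $\ad_\euE S$ with eigenvalue $0$. Since $\euE = \euL \oplus \euC \oplus k\pa_\theta = \euE_c \oplus k\pa'$ (the element $\pa'$ differs from $\pa_\theta$ only by an element of $\euL \subset \euE_c$, so this is still a direct sum decomposition spanning $\euE$), and $\euE_c$ is spanned by $\ad_\euE S$-eigenvectors while $\pa'$ is one too, we conclude that $\euE$ is spanned by eigenvectors of $\ad_\euE S$. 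This is precisely the statement that $S$ is $k$-ad-diagonalizable as an element of $\euE$.

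There is no real obstacle left at this stage: the work has all been front-loaded into Lemmas~\ref{lemma1} and~\ref{lemma4}. The only point requiring a moment's care is the bookkeeping that $\{ \text{eigenvectors in } \euE_c\} \cup \{\pa'\}$ really does span $\euE$ and not merely $\euE_c + k\pa_\theta$ up to some ambiguity — but since $\pa' \equiv \pa_\theta \pmod{\euL}$ this is immediate, and one does not even need the eigenvectors across different eigenvalues to be linearly independent for the spanning conclusion (though of course they are, being attached to distinct eigenvalues). Thus the proof is a two-line assembly: cite Lemma~\ref{lemma1} to diagonalize on $\euE_c$, cite Lemma~\ref{lemma4} to diagonalize the complementary line, and observe the two together span $\euE$.
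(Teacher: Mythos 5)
Your proof is correct and follows exactly the route the paper intends: its own proof of this corollary is the one-line citation of Lemma~\ref{lemma1} and Lemma~\ref{lemma4}, and your write-up just makes explicit the decomposition $\euE = \euE_c \oplus k\pa'$ that glues the two lemmas together. Nothing to add.
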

\begin{proof}
This follows from Lemma~\ref{lemma1} and Lemma~\ref{lemma4}.
\end{proof}

Let $P=a\begin{bmatrix} 1 & 0 \\ 0 & -1 \end{bmatrix}$ where $a\in k^{\times}$ is
an arbitrary scalar and
$\euH=k\cdot P\oplus \euC\oplus \euD$ be
the standard MAD in $\euE$. Let $\euH'$
be any MAD of $\euE$ which contains $S$ (such a MAD does exist by \cite[Lemma 9.3]{CGP}).
We will show that $\euH$ and $\euH'$ are not conjugate in $\euE$.
We will need some auxiliary lemmas.

\begin{lemma}\label{not R-conjugate}
The elements $S$ and $P$ of $\lsl_2(Q)$ are not conjugate
by an $R$-linear automorphism of $\lsl_2(Q).$
\end{lemma}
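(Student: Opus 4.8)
The plan is to distinguish $S$ and $P$ by an invariant of a semisimple element of $\lsl_2(Q)$ that is preserved by $R$-linear automorphisms. The most natural such invariant is the conjugacy class of the centralizer $\mathfrak{z}_{\lsl_2(Q)}(\cdot)$ — or, what amounts to essentially the same thing geometrically, the isomorphism class of the pair of eigenspaces that the element determines inside $V = Q \oplus Q$. Since any $R$-linear automorphism of $\lsl_2(Q) = \mathfrak{sl}_2(Q)$ comes (after base change and using that $\lsl_2$ is an inner form) from an $R$-algebra automorphism of $\euA = M_2(Q)$, hence from conjugation by a unit of $M_2(Q)$ together with an automorphism of $Q$, it suffices to show that $s$ and the diagonal idempotent datum $\operatorname{diag}(1,-1)$ are not related by a $Q$-semilinear automorphism of $V$. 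Concretely: the element $S$ has eigenvalue decomposition $V = W \oplus U$ with $W$ a non-free rank-$1$ projective $Q$-module, whereas $P$ has eigenvalue decomposition $V = Q e_1 \oplus Q e_2$ into two free rank-$1$ modules. If $\varphi$ were an $R$-linear automorphism of $\lsl_2(Q)$ with $\varphi(S)$ a scalar multiple of $P$, then $\varphi$ would carry the $(\pm 2)$-eigenspaces of $\ad S$ onto the $(\pm 2a)$-eigenspaces of $\ad P$, and unwinding this at the level of $V$ would force $W \cong Q$ as a $Q$-module, contradicting the theorem of \cite{GP} that $W$ is not free.

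The key steps, in order: (1) Recall/invoke that every $R$-linear automorphism of $\lsl_2(Q)$ extends uniquely to an $R$-linear automorphism of $\euA = M_2(Q)$ — this is standard, since $\lsl_2(Q)$ is the derived algebra and $\euA$ is its multiplier/enveloping associative algebra, or one may cite the general conjugacy/automorphism results for Azumaya-algebra Lie tori. (2) By Skolem–Noether over the Azumaya algebra $M_2(Q)$ (equivalently, since $\operatorname{Aut}_{R\text{-alg}}(M_2(Q))$ is $\mathbf{PGL}_2(Q) \rtimes \operatorname{Aut}_R(Q)$), any such automorphism has the form $\varphi(X) = g \cdot {}^{\rho}X \cdot g^{-1}$ for some $g \in \mathbf{GL}_2(Q)$ and $\rho \in \operatorname{Aut}_R(Q)$; transport this through the identification $\euA = \End_Q(V)$ to get a $\rho$-semilinear $Q$-automorphism $\Phi$ of $V$ with $\varphi(\psi) = \Phi \psi \Phi^{-1}$. (3) Suppose for contradiction $\varphi(S) = \lambda P$ for some $\lambda \in k^\times$; comparing eigenvalues of $\ad \varphi(S) = \varphi \circ \ad S \circ \varphi^{-1}$ with those of $\ad(\lambda P)$, and using the Corollary that $\operatorname{ad} S$ has eigenvalues $0, \pm 2$ while $\operatorname{ad} P$ has eigenvalues $0, \pm 2a$, forces $\lambda a = \pm 1$, and more importantly forces $\Phi$ to map the $s$-eigenspace $W \subset V$ isomorphically (as a $\rho$-semilinear, hence after untwisting by $\rho$ an honest $Q$-linear) onto an eigenspace of $P$, namely a free rank-$1$ summand $Q e_i$. (4) Conclude $W \cong Q$ as a right $Q$-module (semilinear base change by the $R$-algebra automorphism $\rho$ preserves the property of being free, since $\rho$ is an automorphism of $Q$ over $R$), contradicting \cite{GP}.

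The main obstacle I expect is step (1)–(2): making precise and justifying that an arbitrary $R$-linear Lie-algebra automorphism of $\lsl_2(Q)$ is induced by an associative automorphism of $M_2(Q)$, and hence has the Skolem–Noether form. One has to rule out "exotic" automorphisms; for type $A_1$ there are no diagram automorphisms to worry about, and $\lsl_2(Q)$ has trivial center (as $Q$ has center $R$ and we are in rank $2$, the center of $\mathfrak{sl}_2(Q)$ is $0$ over a domain), so $\operatorname{Aut}_R(\lsl_2(Q)) = \operatorname{Aut}_R(M_2(Q))$ should follow cleanly, but it deserves an explicit sentence or a precise citation (e.g. to \cite{CGP} or the structure theory of \cite{Ne1}). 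Once that reduction is in hand, everything else is the eigenspace bookkeeping of steps (3)–(4) plus the single hard input — the non-freeness of $W$ — which is quoted from \cite{GP}. A secondary subtlety is keeping track of the semilinear twist $\rho$: one must note that the class of a projective $Q$-module in $\operatorname{Pic}(Q)$ (or just the free/non-free dichotomy) is invariant under restriction of scalars along an $R$-automorphism of $Q$, so the twist does no harm.
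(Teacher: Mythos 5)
Your core idea is exactly the paper's: the distinguishing invariant is the eigenspace decomposition of $V=Q\oplus Q$, with $S$ having the non-free rank-one projective eigenspace $W$ while the eigenspaces of $P$ are free, quoting \cite{GP} for non-freeness. Steps (3)--(4) match the paper's Case 1 essentially verbatim.

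However, there is a genuine gap precisely at the point you flagged as the ``main obstacle,'' and the justification you sketch for it is wrong. You claim that every $R$-linear Lie algebra automorphism of $\lsl_2(Q)$ is induced by an $R$-algebra automorphism of $\euA=M_2(Q)$, arguing that ``for type $A_1$ there are no diagram automorphisms to worry about.'' This confuses the type of the root grading of the Lie torus (which is $A_1$) with the absolute type of $\lsl_2(Q)$ as a form of a simple Lie algebra over $R$: since $M_2(Q)$ is an Azumaya algebra of degree $4$, the algebra $\lsl_2(Q)$ is an inner form of $\lsl_4(R)$, i.e.\ of type $A_3$, which \emph{does} carry a nontrivial outer (diagram) automorphism. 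Concretely, because $[Q]$ has order $2$ in the Brauer group, $M_2(Q)$ admits an $R$-linear anti-automorphism $X\mapsto \overline{X}^{\,T}$ built from the canonical involution of $Q$, and $X\mapsto -\overline{X}^{\,T}$ is an $R$-linear Lie algebra automorphism of $\lsl_2(Q)$ that is induced by no associative automorphism whatsoever. So your reduction via Skolem--Noether to $\varphi(X)=g\cdot{}^{\rho}X\cdot g^{-1}$ does not cover all of $\Aut_{R\text{-Lie}}(\lsl_2(Q))$, and the argument as written leaves this outer coset untreated. The paper closes this exactly by exhibiting the outer automorphism $\pi$ explicitly and observing that $\pi$ fixes $P$ (up to sign, hence preserving its pair of free eigenspaces), which reduces the outer case to the inner one; your proof needs this extra half-page (or an equivalent observation, e.g.\ that any Lie automorphism is induced by an automorphism \emph{or anti-automorphism} of $M_2(Q)$, and an anti-automorphism still carries the eigenspace datum of $P$ to a pair of free rank-one summands). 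A minor further point: your semilinear twist $\rho\in\Aut_R(Q)$ is harmless but also unnecessary here, since $\Pic(R)=0$ for $R=k[t_1^{\pm1},t_2^{\pm1}]$ forces all $R$-algebra automorphisms of the Azumaya algebra $Q$ to be inner, so $\rho$ can be absorbed into $g$.
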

\begin{proof}
Let  $\bar{ }:Q\rightarrow Q$ be an involution given by $x+y\imath +z\jmath + t\imath \jmath \mapsto x-y \imath -z\jmath - t\imath \jmath$, $x,y,z,t\in R.$
Then any $R$-linear automorphism of $\lsl_2(Q)$ is a conjugation with some matrix in
${\GL}_2(Q)$ or a map
$$
\pi:\begin{bmatrix} a & b \\ c & d \end{bmatrix}\rightarrow
\begin{bmatrix} \overline{a} & \overline{c} \\ \overline{b} & \overline{d} \end{bmatrix}^{-1}
$$
(which is a nontrivial outer $R$-automorphism of $\lsl_2(Q)$)
followed by a conjugation. Assume that
$\phi(P)=S$
for some $\phi\in \Aut_{R-Lie}(\lsl_2(Q)).$

Case $1$: $\phi$ is a conjugation. Then the eigenspaces in $V=Q\oplus Q$ of the
$Q$-linear transformation
$\phi(P)$ are free $Q$-modules of rank 1
(because they are images of those of $P$).
Since $W$ is an eigenspace of $S$ which is not free $Q$-module we get a contradiction.

Case $2$: $\phi$ is $\pi$ followed by a conjugation. But $\pi(P) =P$,
hence we are reduced to the previous case.
 \end{proof}

 Recall that for  an arbitrary $k$-algebra $A$, 
$$
\Ctd_k(A) = \{ \chi \in \End_k(A) : \chi(ab) = \chi(a)b = a\chi(b) \, \forall \, a,b \in A \}.
$$ 
By \cite{BN,GP} we may identify $R \simeq \Ctd_k(\lsl_2(Q))$, $r\mapsto (\chi: x\mapsto rx)$.

The proof of the following lemma is inspired by \cite{CGP2}.
\begin{lemma}\label{not conjugate}
$S$ and $P$ are not conjugate by a $k$-automorphism of $\lsl_2(Q).$
\end{lemma}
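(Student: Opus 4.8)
The plan is to leverage Lemma~\ref{not R-conjugate} by showing that any $k$-automorphism of $\lsl_2(Q)$ sending $P$ to $S$ could be modified into an $R$-linear one. The key observation is that the centroid $\Ctd_k(\lsl_2(Q)) \simeq R$ is a characteristic object: any $\varphi \in \Aut_k(\lsl_2(Q))$ acts on it by $\chi \mapsto \varphi \circ \chi \circ \varphi^{-1}$, giving a ring automorphism $\varphi_* \in \Aut_k(R)$. An automorphism $\varphi$ is $R$-linear precisely when $\varphi_* = \id_R$. So the strategy is: assume $\varphi(P) = S$, understand the induced $\varphi_* \in \Aut_k(R)$, and argue that after composing $\varphi$ with a suitable $R$-semilinear automorphism that fixes $P$ (or a conjugate of it) we may assume $\varphi_* = \id$, contradicting Lemma~\ref{not R-conjugate}.

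The steps, in order, would be as follows. First I would recall that $\Aut_k(R) = \Aut_k(k[t_1^{\pm1},t_2^{\pm1}])$ is generated by the torus action $t_i \mapsto \lambda_i t_i$ ($\lambda_i \in k^\times$) together with $\GL_2(\ZZ)$ acting on the exponent lattice — i.e.\ $R$-automorphisms are monomial. Next, I would examine how $\varphi_*$ interacts with the $\ZZ^2$-grading: the element $P = a\,\diag(1,-1)$ lies in the degree-$(0,0)$ component $\euL^{(0,0)}$, and more importantly its centralizer and the grading it induces are tied to the torus structure. Since $P$ is $k$-ad-diagonalizable with the whole Cartan-type behaviour, $\varphi(P)=S$ forces $\varphi$ to carry the eigenspace decomposition of $\ad P$ to that of $\ad S$. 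The crucial point is that $\varphi_*$ must preserve whatever arithmetic data distinguishes $Q=(t_1,t_2)$; concretely, $\varphi_*$ must send the quaternion symbol $(t_1,t_2)$ to an isomorphic one, and one checks that the class of $W$ (the non-free rank-one projective module) in $\mathrm{Pic}$ or in the relevant cohomology is not affected by monomial changes of variable in a way that would free it up. Then, composing $\varphi$ with the inverse of the $k$-automorphism of $\lsl_2(Q)$ induced by $\varphi_*^{-1} \in \Aut_k(R)$ (which exists because $R$-semilinear automorphisms of $\lsl_2(Q)$ lift automorphisms of $R$), I obtain an automorphism $\psi$ with $\psi_* = \id$, hence $\psi$ is $R$-linear, and $\psi$ sends $P$ — or a scalar multiple, but scalars are absorbed since $a$ was arbitrary and $\diag(1,-1)$ is what matters — to $S$. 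This contradicts Lemma~\ref{not R-conjugate}, completing the proof.

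The main obstacle, as I see it, is controlling the induced automorphism $\varphi_*$ of $R$ well enough to conclude it can be ``undone'' by an $R$-semilinear automorphism of $\lsl_2(Q)$ that still relates $P$ and $S$ appropriately. There are two delicate points: (i) one must know that every $k$-automorphism of $R$ is actually realized by a $k$-automorphism of $\lsl_2(Q)=\lsl_2(Q)$ lifting it — this should follow from the fact that $Q=(t_1,t_2)$ is a symbol algebra and torus/lattice automorphisms of $R$ permute the $t_i$ up to scalars and hence act on the symbol, but one needs the target $(t_1^{\lambda_1}\cdots, \ldots)$ to again be (isomorphic to) $(t_1,t_2)$ so that $Q$ is genuinely preserved; and (ii) the element $P$ must be invariant (up to the allowable scalar $a \in k^\times$ and up to the outer automorphism $\pi$, which fixes $P$ anyway by the computation in Lemma~\ref{not R-conjugate}) under the semilinear twist. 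If instead $\varphi_*$ genuinely permutes $t_1 \leftrightarrow t_2$, one should note that $(t_1,t_2)\cong(t_2,t_1)$ canonically via $\imath\leftrightarrow\jmath$, and track the module $W$ through this symmetry — the defining map $m(u,v) = (1+\imath)u - (1+\jmath)v$ is symmetric enough under $\imath\leftrightarrow\jmath$ that $W$ stays non-free. Assembling these compatibilities carefully is where the real work lies; once they are in place, the reduction to Lemma~\ref{not R-conjugate} is immediate.
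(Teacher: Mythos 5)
Your overall skeleton is the same as the paper's: use the centroid $\Ctd_k(\lsl_2(Q))\simeq R$ to measure the failure of $R$-linearity of a hypothetical $\varphi$ with $\varphi(P)=S$, ``undo'' the induced automorphism $\varphi_*\in\Aut_k(R)$ by a semilinear automorphism fixing $P$, and land in a contradiction with Lemma~\ref{not R-conjugate}. Where you diverge is in the mechanism for undoing the twist. The paper never classifies $\Aut_k(R)$: it forms the twisted algebra $\lsl_2(Q)\otimes_{C(\varphi)}R\simeq \lsl_2\bigl((\varphi_*(t_1),\varphi_*(t_2))\bigr)$, observes that it is $R$-isomorphic to $\lsl_2(Q)$ by construction, hence defines the same class in $H^1(R,\PGL_4)$, and reads off $[Q]=[Q\otimes_{C(\varphi)}R]$ in $H^2(R,\bbG_m)$ from the boundary map; this yields the $R$-algebra isomorphism needed to correct $\varphi$. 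Your route instead uses that $\Aut_k(R)$ consists of monomial substitutions $t_i\mapsto\lambda_i t_1^{a_i}t_2^{b_i}$ with $(a_i,b_i)$ forming a matrix in $\GL_2(\ZZ)$ (which is correct, since automorphisms preserve the unit group $k^\times\times\ZZ^2$), and then needs that each such $\rho$ lifts to a $\rho$-semilinear $k$-algebra automorphism of $Q$, i.e.\ that $(\rho(t_1),\rho(t_2))\cong(t_1,t_2)$ over $R$. That buys you an argument with no Galois cohomology, at the price of an explicit computation the paper avoids.

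That lifting claim is exactly the crux, and in your write-up it is asserted rather than proved (you flag it yourself as point (i)). It is true: since $k$ is algebraically closed the scalars $\lambda_i$ are squares and can be absorbed, and for $u=t_1^at_2^b$, $v=t_1^ct_2^d$ with $ad-bc=\pm1$ the elements $\imath^a\jmath^b$ and $\imath^c\jmath^d$ of $Q$ anticommute (because $ad-bc$ is odd) and square to $\pm u$, $\pm v$; rescaling by powers of $\sqrt{-1}$ gives generators exhibiting $(u,v)\cong Q$, and they generate all of $Q$ because $ad-bc=\pm1$. With that lemma in hand your composition $\varphi\circ\sigma$ is $R$-linear, still sends $P$ to $S$ (the semilinear $\sigma$ is $k$-linear and fixes $\operatorname{diag}(1,-1)$), and Lemma~\ref{not R-conjugate} finishes the proof. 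Two smaller points: the discussion of $W$ and $\Pic$ is out of place here --- the non-freeness of $W$ is consumed entirely inside Lemma~\ref{not R-conjugate} and plays no role in the reduction; and the remarks about $\varphi$ preserving eigenspace decompositions of $\ad P$ versus $\ad S$ are likewise not needed for this step.
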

\begin{proof}
Assume the contrary. Let $\phi\in \Aut_{k-Lie}(\lsl_2(Q))$ be such that
$\phi(P)=S.$ It induces an automorphism $C(\phi)$ of $\Ctd_k(\lsl_2(Q))$ defined by $\chi\mapsto \phi^{-1}\circ \chi \circ \phi$ for all $\chi\in \Ctd_k(\lsl_2(Q))$.
Consider a new Lie algebra $\euL'=\lsl_2(Q)\otimes_{C(\phi)} R$ over $R$.
As a set it coincides with $\lsl_2(Q)$. Also, the Lie bracket in $\euL'$
is the same as in $\lsl_2(Q)$, but the action of $R$
on $\euL'$ is
given by the composition of $C(\phi)$ and the standard action of $R$ on $\lsl_2(Q)$.
Thus we have a natural $k$-linear Lie algebra isomorphism
$$
\psi: \euL'=\lsl_2(Q)\otimes_{C(\phi)}R\rightarrow \lsl_2(Q)
$$
which sends $P$ to $P$.
It follows from the construction that $\phi \circ \psi:\euL'\to \lsl_2(Q)$ is an
$R$-linear isomorphism.

Note that since the action of $R$ on $\lsl_2(Q)$ is componentwise we have a
natural identification
$$
\euL'=\lsl_2(Q)\otimes_{C(\phi)}R\simeq \lsl_2(Q\otimes_{C(\phi)}R)
$$
and it easily follows from the construction that $Q\otimes_{C(\phi)}R$ is a quaternion algebra
$(\phi(t_1),\phi(t_2))$ over $R$.
Thus $\lsl_2(Q)$ and $\lsl_2(Q\otimes_{C(\phi)} R)$ are $R$-isomorphic and they are $R$-forms of
$\lsl_4(R).$ Moreover they are inner forms, hence correspond to an element
$[\xi]\in H^1(R, \PGL_4)$.

The boundary map $H^1(R,{\rm PGL}_4)\to H^2(R,\G_m)$ maps $[\xi]$
to the Brauer equivalence class of both $Q$ and $Q\otimes_{C(\phi)} R.$
Since $[Q]=[Q\otimes_{C(\phi)} R]$ it follows that there is an $R$-algebra isomorphism
$\overline{\theta}: Q\otimes_{C(\phi)}R \rightarrow Q$ which in turn induces a
canonical $R$-Lie algebra isomorphism
$$
\theta:\lsl_2(Q\otimes_{C(\phi)}R)\rightarrow \lsl_2(Q)
$$
by componentwise application of $\overline{\theta}.$
Clearly, $\theta(P)=P$.

Finally, consider an $R$-linear automorphism
$$
\phi'=\phi\circ\psi\circ\theta^{-1}\in \Aut_{R-Lie}(\lsl_2(Q)).
$$
We have
$$
\phi'(P)=\phi(\psi(\theta^{-1}(P)))=\phi(\psi(P))=\phi(P)=S
$$
which contradicts Lemma~\ref{not R-conjugate}.
\end{proof}

\begin{theorem}\label{counterexample}
There is no $\phi\in \Aut_{k-Lie}(\euE)$ such that $\phi(\euH')=\euH$.
\end{theorem}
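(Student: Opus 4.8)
The plan is to argue by contradiction: suppose $\phi \in \Aut_{k\text{-Lie}}(\euE)$ satisfies $\phi(\euH') = \euH$. Since $S \in \euH'$, the element $\phi(S)$ lies in $\euH = k\cdot P \oplus \euC \oplus \euD$, so we can write $\phi(S) = cP + z + d\,\pa_\theta$ for some $c \in k$, $z \in \euC$, $d \in k$. The first step is to pin down $c$ and $d$ using invariants of $\ad S$ that are preserved by $\phi$. The operator $\ad_\euE S$ has eigenvalues $0, \pm 2$ (by the Corollary following Lemma~\ref{in sl2}), so $\ad_\euE \phi(S)$ has the same eigenvalues; in particular $\phi(S)$ acts non-nilpotently but is still $\ad$-diagonalizable. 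I would compute the adjoint action of a general element $cP + z + d\,\pa_\theta$ on $\euE = \euL \oplus \euC \oplus \euD$: the central part $z$ acts trivially, $\pa_\theta$ acts on the graded piece $\euL^{(m,n)}$ (and its image in $\euE$) by the scalar $\theta((m,n)) = m + n\,\theta((0,1))$, and $cP$ acts by the grading in the $\lsl_2$-direction with eigenvalues $0, \pm 2c$. Since $\theta((0,1))$ is irrational, the combined operator $\ad(cP + z + d\,\pa_\theta)$ has bounded spectrum (namely $\subseteq \{0, \pm 2c\}$, finitely many values) if and only if $d = 0$; otherwise the eigenvalues $2mc + \dots$ range over an infinite set. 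Hence $d = 0$, and then matching the eigenvalue $2$ forces $c = \pm 1$. Replacing $\phi$ by its composition with $\ad$ of the obvious element negating $P$ if necessary, we may assume $c = 1$, i.e. $\phi(S) = P + z$ with $z \in \euC$.

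The second step is to descend from $\euE$ to $\lsl_2(Q)$. The subalgebra $\euL = \lsl_2(Q)$ is an ideal of $\euE$? No — it is not an ideal, but $\euE_c = \euL \oplus \euC$ is an ideal, and moreover $[\euE,\euE]_\euE = \euE_c$ and the center of $\euE_c$ is $\euC$. Since $\phi$ is an automorphism of $\euE$, it preserves $\euE_c = [\euE,\euE]_\euE$ and preserves its center $\euC$, hence induces an automorphism $\bar\phi$ of the quotient $\euE_c / \euC \cong \lsl_2(Q)$. Under this identification $S$ maps to the class of $S$ and $P + z$ maps to the class of $P$, so $\bar\phi$ is a $k$-linear Lie algebra automorphism of $\lsl_2(Q)$ sending $S$ to $P$ (or $P$ to $S$ after inverting). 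But this directly contradicts Lemma~\ref{not conjugate}, which asserts precisely that $S$ and $P$ are not conjugate by a $k$-automorphism of $\lsl_2(Q)$. This yields the desired contradiction and proves the theorem.

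The main obstacle I anticipate is the bookkeeping in the first step — verifying carefully that the spectrum of $\ad_\euE(cP + z + d\,\pa_\theta)$ is infinite when $d \neq 0$, which requires checking that the contribution of $d\,\pa_\theta$ genuinely varies over the $\mathbb{Z}^2$-grading and is not cancelled by the $cP$-contribution. Here the irrationality of $\theta((0,1))$ is essential: it guarantees that the values $d(m + n\theta((0,1)))$ for $(m,n) \in \mathbb{Z}^2$ are unbounded (indeed already $dm$ for $n = 0$ gives an infinite set when $d \neq 0$), whereas a $k$-ad-diagonalizable element of $\euE$ — being contained in a MAD, which by the EALA axioms acts with finitely many eigenvalues, or at least one must check that $\ad$-diagonalizability forces... — in fact one should phrase it as: $\phi(S)$ is $\ad$-diagonalizable with the \emph{same finite set} of eigenvalues $\{0, \pm 2\}$ as $S$, which is impossible unless $d = 0$. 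A secondary point requiring care is checking that the quotient $\euE_c/\euC$ is indeed $\lsl_2(Q)$ with its original bracket and that the induced map on it is a genuine $k$-Lie automorphism fixing the images of $P$ and $S$ appropriately; this is routine given the explicit cocycle description of $\euE$ in Section~\ref{example}, but it is where one must be slightly careful about the $a \in k^\times$ scaling in $P = a\,\mathrm{diag}(1,-1)$, noting that Lemma~\ref{not conjugate} is stated for arbitrary such $a$.
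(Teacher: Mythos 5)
Your proposal is correct, and its core is the same as the paper's: reduce to an induced $k$-automorphism of $\euL\cong\euE_c/\euC$ carrying $S$ to a nonzero multiple of $\operatorname{diag}(1,-1)$, which contradicts Lemma~\ref{not conjugate}. The only real difference is how you establish $\phi(S)\in k\cdot P\oplus\euC$: you do it by a spectral computation showing that a nonzero $\pa_\theta$-component would force $\ad\phi(S)$ to have infinitely many eigenvalues, whereas the paper gets it more cheaply by observing that $\euE_c=[\euE,\euE]_{\euE}$ is $\phi$-stable, that $k\cdot S\oplus\euC\subset\euH'\cap\euE_c$, and that a dimension count (since $\phi(\euH'\cap\euE_c)=k\cdot P\oplus\euC$) forces equality, so $\phi(k\cdot S)\equiv k\cdot P \bmod \euC$ with no eigenvalue analysis at all. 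Your route buys nothing extra here but is sound; as you yourself note, the normalization to $c=1$ is unnecessary since Lemma~\ref{not conjugate} already covers $P=a\,\operatorname{diag}(1,-1)$ for arbitrary $a\in k^{\times}$.
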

\begin{proof}
Assume the contrary. Let $\phi\in \Aut_k(\euE)$ be such that $\phi(\euH')=\euH$. Since $\euE_c=[\euE,\euE]_{\euE}$ we have that $\euE_c$ is $\phi$-stable. Hence
\begin{equation}\label{core}
\phi(\euH'\cap \euE_c)= \euH\cap \euE_c=k\cdot P\oplus \euC.
\end{equation}
Of course, $k\cdot S\oplus \euC\subset \euH'\cap \euE_c$, because $\euC$ is the center of $\euE$.
Therefore by dimension reasons we have $k\cdot S\oplus \euC=\euH'\cap \euE_c$.

The automorphism $\phi$ induces a $k$-automorphism
$\phi_{\euL}:\euL\to \euL$. 
By~(\ref{core}),
$\phi_{\euL}(k\cdot S)=k\cdot P$.
Hence there exists a scalar $\alpha\in k^{\times}$ such that $\phi_{\euL}(S)=\alpha\cdot P$.
 But this contradicts Lemma~\ref{not conjugate}.
\end{proof}

\begin{remark}
That $\euH'$ is not conjugate to $\euH$ by a $k$-automorphism of $\euE$ means that the MAD $\euH'$ is not a structure MAD of $\euE$, i.e. the pair $(\euE, \euH')$ can not be given a structure of an extended affine Lie algebra. However, we do not know which of the axioms of an EALA fails to hold. Moreover, if we had known which axiom fails we would have got the counterexample right away (see the proof of \cite[Proposition 3.4]{CNPY}).  
\end{remark}


\begin{thebibliography}{ABCDE}

\bibitem[BN]{BN} G.~Benkart and E.~Neher, \textit{The centroid of
    extended
    affine and root graded {L}ie algebras}, J. Pure Appl. Algebra
    \textbf{205} (2006), 117--145.


\bibitem[CGP1]{CGP} V.~Chernousov, P.~Gille and A.~Pianzola, {\em Conjugacy theorems for loop reductive group schemes and Lie algebras}, Bull. Math. Sci. (2014) {\bf{4}}, 281-324.

\bibitem[CGP2]{CGP2} V.~Chernousov, P.~Gille and A.~Pianzola, {\em Generalized Onsager algebras and Grothendieck's dessins d'enfants}, arXiv:1311.7097.

\bibitem[CNPY]{CNPY} V.~Chernousov, E.~Neher, A.~Pianzola and U.~Yahorau, {\em On conjugacy of Cartan subalgebras in extended affine Lie algebras}, in preparation.

\bibitem[GP]{GP} P.~Gille and A.~Pianzola, {\em Galois cohomology and forms of algebras over Laurent polynomial rings}, Math. Annalen {\bf{338}} (2007) 497-543.

\bibitem[Ne1]{Ne1} E.~Neher, {\em Extended affine Lie algebras -- an introduction to their structure theory}, Fields Inst. Comm. (2011), 107-167.

\bibitem[Ne2]{Ne4} E.~Neher, {\em Extended affine Lie algebras}, C. R. Math. Acad. Sci. Soc. R. Can. 26 (2004), 90-96.

\bibitem[PK]{PK} D.H.~Peterson and V.~Kac, {\em Infinite flag varieties and conjugacy theorems}, Proc. Natl. Acad. Sci. USA, 80 (1983), 1778-1782.

\end{thebibliography}
\end{document}